\documentclass[11pt,a4paper]{amsart}

\usepackage[utf8]{inputenc}
\usepackage[T1]{fontenc}
\usepackage{amsmath,amssymb,amsthm}
\usepackage{hyperref}
\usepackage{xcolor}

\newtheorem{innercustomthm}{Theorem}
\newenvironment{customthm}[1]
  {\renewcommand\theinnercustomthm{#1}\innercustomthm}
  {\endinnercustomthm}
  
\newtheorem{innercustomcor}{Corollary}
\newenvironment{customcor}[1]
  {\renewcommand\theinnercustomcor{#1}\innercustomcor}
  {\endinnercustomcor}

\newtheorem{theorem}{Theorem}[section]
\newtheorem*{theorem*}{Theorem}
\newtheorem{lemma}[theorem]{Lemma}
\newtheorem{corollary}[theorem]{Corollary}
\newtheorem{proposition}[theorem]{Proposition}

\newtheorem{problem}[theorem]{Problem}

\newtheorem{conjecture}[theorem]{Conjecture}

\theoremstyle{remark}

\numberwithin{equation}{section}

\DeclareMathOperator{\divg}{div}
\DeclareMathOperator{\Hess}{Hess}
\DeclareMathOperator{\tr}{tr}

\newcommand{\R}{\mathbb{R}}
\newcommand{\gl}{\nabla^{L}}
\newcommand{\lapL}{\Delta_{L}}

\title[CMC foliations on Euclidean spaces are minimal foliations]{CMC foliations on Euclidean spaces are minimal foliations}

\author[J. E. Sampaio]{Jos\'e Edson Sampaio}
\author[E. C. da Silva]{Euripedes Carvalho da Silva}

\address{Jos\'e Edson Sampaio:  
              Departamento de Matem\'atica, Universidade Federal do Cear\'a,
	      Rua Campus do Pici, s/n, Bloco 914, Pici, 60440-900, 
	      Fortaleza-CE, Brazil. \newline   
              E-mail: {\tt edsonsampaio@mat.ufc.br}                    
}
\address{Euripedes Carvalho da Silva: Departamento de Matem\'atica, Instituto Federal de Educa\c{c}\~ao, Ci\^encia e Tecnologia do Cear\'a,
 	      Av. Parque Central, 1315, Distrito Industrial I, 61939-140, 
 	      Maracana\'u-CE, Brazil. \newline 
 	      and 	      Departamento de Matem\'atica, Universidade Federal do Cear\'a,
	      Rua Campus do Pici, s/n, Bloco 914, Pici, 60440-900, 
	      Fortaleza-CE, Brazil.
               E-mail: {\tt euripedes.carvalho@ifce.edu.br}
 } 

\keywords{CMC Foliations, CMC hypersurfaces, Minimal hypersurface, Stable  hypersurface}
\subjclass[2020]{53C12, 53C42}
\thanks{The first named author was partially supported by CNPq-Brazil grants 303375/2025-6, 404611/2025-7, and 405618/2023-9. The second named author was partially supported by CNPq-Brazil grants 400043/2025-4 and 405618/2023-9. This work was supported by the Serrapilheira Institute (grant number Serra -- R-2110-39576).
}
\begin{document}

\begin{abstract}

In this article, we provide a complete answer to one of the most classical conjectures in differential geometry (of foliations). More precisely, we present a complete positive answer to the conjecture, due to Meeks III, P\'erez, and Ros (2008), which states that every foliation on $\mathbb{R}^{n+1}$ by (possibly varying) CMC hypersurfaces is a foliation by minimal hypersurfaces. Moreover, if $n\leq 6$, such a CMC foliation must consist of parallel hyperplanes. 
\end{abstract}

\maketitle
\tableofcontents
\section{Introduction}

In 1986, Solomon \cite{Solomon-1986} posed the following problem:
\begin{problem}\label{problem:minimal_solomon}
Is every foliation of $\mathbb{R}^{n+1}$, $n\leq 7$, by properly embedded minimal hypersurfaces a foliation by parallel affine hyperplanes? 
\end{problem}
In the case that the leaves are properly embedded, Problem \ref{problem:minimal_solomon} has an affirmative answer if $n+1<8$, and it has a negative answer if $n+1>8$, since, in those dimensions, there are minimal graphs that are not hyperplanes. In the critical dimension $n+1= 8$, Solomon in \cite{Solomon-1986} stated that it appears quite difficult to settle this question, even in the case where the leaves are properly embedded.
When the leaves are not supposed to be properly embedded, the problem is much more complicated.

This article deals with the following long-standing and more complicated conjectures due to Meeks III, P\'erez, and Ros (see \cite[Conjecture 5.1]{MeeksPR:2008}):
\begin{conjecture}\label{conjecture:cmc_solomon}
Every foliation of $\mathbb{R}^{n+1}$, $n\leq 7$, by (possibly varying) CMC hypersurfaces is a foliation by parallel affine hyperplanes.
\end{conjecture}

\begin{conjecture}\label{conjecture:yau-docarmo-solomon}
Every foliation of $\mathbb{R}^{n+1}$ by (possibly varying) CMC hypersurfaces is a foliation by minimal hypersurfaces.
\end{conjecture}
Although these conjectures were only explicitly stated in 2008, many mathematicians have been working on them since at least the 1980s. For instance, in 1987, Barbosa, Gomes, and Silveira in \cite[Theorem 3.12]{BarbosaGS-1987} proved that Conjectures \ref{conjecture:cmc_solomon} and \ref{conjecture:yau-docarmo-solomon} have positive answers when $n=2$ and under the extra hypothesis that all the leaves of the foliation have the same constant mean curvature. In 1988, Meeks III in \cite[Theorem 4.1]{Meeks-1988} presented a beautiful positive answer for Conjectures \ref{conjecture:cmc_solomon} and \ref{conjecture:yau-docarmo-solomon} when $n=2$. 
Still in \cite{MeeksPR:2008}, Meeks III, P\'erez and Ros proved that Conjecture \ref{conjecture:yau-docarmo-solomon} has a positive answer if $n\leq 4$.
So far, to the authors' knowledge, the aforementioned results from \cite{Meeks-1988} and \cite{MeeksPR:2008} are unique cases where these conjectures are known to be true without any additional hypotheses.
One can find other partial answers or related results in the references  \cite{BarbosaBM-2008}, \cite{BarbosaKO-1991}, \cite{BarbosaGS-1987}, \cite{doCarmoP:1979}, \cite{doCarmo:1988}, \cite{ChodoshL-2021}, \cite{ChodoshLMD-2024}, \cite{ColbrieS:1979}, \cite{CoswasckF-2021}, \cite{IliasNS:2022}, \cite{MeeksPR:2008}, \cite{MeeksPR:2016}, \cite{Oshikiri-1982}, \cite{Oshikiri:2002}, \cite{Pogorelov:1981}, \cite{SampaioS-2024}, \cite{SampaioS-2025}, \cite{SampaioS-2026}, \cite{Ye:1996}, etc.

In this article, we give a complete positive answer to Conjecture \ref{conjecture:yau-docarmo-solomon} and prove that Conjecture \ref{conjecture:cmc_solomon} has a positive answer whether $n\leq 6$. More precisely, we prove the following:

\begin{customthm}{\ref*{thm:main_euclidean}}
Let $\mathfrak{F}$ be a codimension one CMC foliation of $\mathbb{R}^{n+1}$. Then, each leaf of $\mathfrak{F}$ is a minimal and stable hypersurface. 
\end{customthm}
\begin{customcor}{\ref*{thm:main_euclidean_low_dim}}
Let $\mathfrak{F}$ be a codimension one CMC foliation of $\mathbb{R}^{n+1}$. If $n\leq 6$, then $\mathfrak{F}$ consists entirely of parallel hyperplanes.
\end{customcor}

\section{Preliminaries}
\subsection{Setting and notation}

Throughout, $n\ge1$ and $\mathfrak{F}$ is a smooth, transversely oriented foliation of $\R^{n+1}$ by connected hypersurfaces (the leaves), and $N$ is a smooth unit vector field on $\R^{n+1}$ normal to the leaves. Each leaf is an injectively immersed hypersurface, which we regard as a connected manifold with its leaf topology; none of the arguments below requires the leaves to be properly embedded. We write $\nabla$, $\langle\cdot,\cdot\rangle$ and $\divg$ for the Euclidean connection, metric and divergence. The function $H\colon\R^{n+1}\to\R$ is defined by
\begin{equation}\label{eq:H}
\divg N=-nH ;
\end{equation}
it is smooth. Our standing hypothesis is $\mathfrak{F}$ is a CMC foliation, i.e, $H$ is constant along each leaf of $\mathfrak{F}$.

We extend the shape operator to all of $T\R^{n+1}$ by $A:=-\nabla N$, that is, $AX=-\nabla_XN$. Since $\langle\nabla_XN,N\rangle=\tfrac12X\langle N,N\rangle=0$ for every $X$, the image of $A$ at a point is contained in the tangent space of the leaf through that point. For $X$ tangent to a leaf $L$, $AX$ is the shape operator of $L$ with respect to $N$; thus $A|_{TL}$ is a symmetric endomorphism of $TL$, and
\begin{equation}\label{eqtraco}
\tr A|_{TL}=-\sum_{i=1}^n\langle\nabla_{e_i}N,e_i\rangle=
-\divg N=nH,        
\end{equation}

for any orthonormal frame $\{e_i\}$ of $TL$. Hence $H$ is the mean curvature of the leaves with respect to $N$. Note also that $AN=-\nabla_NN$ is tangent to the leaves.

We fix the following notation:
\begin{align*}
g&:=N(H), & T&:=\nabla_NN \quad(\text{tangent to the leaves}),\\
|A|^2&:=\tr\big((A|_{TL})^2\big), & k_1,\dots,k_n&:=\text{eigenvalues of }A|_{TL},
\end{align*}
and $\gl$, $\divg_L$, $\lapL$ denote the gradient, divergence and Laplacian of the leaf $L$ (with its induced metric), applied to the restrictions to $L$ of functions and of vector fields tangent to the leaves. For a symmetric endomorphism (or bilinear form) $S$ of $T_x\R^{n+1}$ we write $\|S\|$ for its operator norm and $\tr_{TL}S:=\sum_{i=1}^n\langle Se_i,e_i\rangle$ for its trace over the tangent space of the leaf through $x$; note that $|\tr_{TL}S|\le n\|S\|$ and that $\tr_{TL}S=\tr S-\langle SN,N\rangle$, so $\tr_{TL}\Hess f$ is a smooth function on $\R^{n+1}$ for every smooth $f$. We shall use repeatedly the elementary inequality
\begin{equation}\label{eq:CS_long}
|A|^2=\sum_{i=1}^n k_i^2\ \ge\ \frac1n\Big(\sum_{i=1}^n k_i\Big)^2=nH^2 .
\end{equation}

\subsection{Preliminary identities}

\begin{lemma}\label{lem:gradH}
$\nabla H=gN$.
\end{lemma}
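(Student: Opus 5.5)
The identity $\nabla H = gN$ says that the gradient of $H$ has no component tangent to the leaves. I will therefore split $\nabla H$ at each point $x\in\R^{n+1}$ into its normal and tangential parts,
\[
\nabla H=\langle \nabla H,N\rangle N+(\nabla H)^{\top},
\]
where $(\nabla H)^{\top}$ is the orthogonal projection onto $T_xL$, $L$ the leaf through $x$. The normal coefficient is $\langle\nabla H,N\rangle=N(H)=g$ by the definition of $g$. So the task reduces to showing $(\nabla H)^{\top}=0$.

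For the tangential part, take any $v\in T_xL$. Since $L$ is an injectively immersed connected submanifold, there is a smooth curve $\gamma\colon(-\varepsilon,\varepsilon)\to L$ with $\gamma(0)=x$ and $\gamma'(0)=v$. Its composition with the inclusion is a smooth curve in $\R^{n+1}$, and we work in the leaf topology, so properness is never used. The standing CMC hypothesis says $H$ is constant along $L$, so $H\circ\gamma$ is constant. Differentiating at $0$ gives
\[
\langle\nabla H(x),v\rangle=dH_x(v)=0 .
\]
As $v\in T_xL$ was arbitrary, $(\nabla H)^{\top}=0$, and hence $\nabla H=gN$.

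There is no genuine obstacle here. The only point needing care is that $H$ is smooth on all of $\R^{n+1}$, which holds by \eqref{eq:H} since $N$ is smooth. This lets us take the ambient gradient and apply the chain rule along curves in the leaf even though the leaf need not be embedded.
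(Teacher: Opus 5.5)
Your proof is correct and follows essentially the same route as the paper. Both arguments kill the tangential component of $\nabla H$ because $H$ is constant on each leaf, then identify the normal coefficient as $\langle\nabla H,N\rangle=N(H)=g$; your use of a curve in the leaf is just the pointwise form of the paper's use of tangent vector fields $X$.
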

\begin{proof}
Since $\mathfrak{F}$ is a CMC foliation, $H$ is constant along each leaf. Hence, \(X(H)=\langle\nabla H,X\rangle=0\) for every vector field \(X\) tangent to the leaves. Therefore, \(\nabla H\) is orthogonal to the leaves and thus parallel to \(N\). Since \(\langle\nabla H,N\rangle=N(H)=g\) and \(|N|=1\), we conclude that \(\nabla H=gN\).
\end{proof}

The following lemma was proved by Oshikiri in \cite{Oshikiri:2002}.

\begin{lemma}[Proposition 2 in \cite{Oshikiri:2002}]\label{lem:gT}
For every vector field $X$ tangent to a leaf $L$ of $\mathfrak F$, we have
\[
X(g)=g\langle T,X\rangle.
\]
Equivalently, the gradient of $g$ along each leaf satisfies
\[
\nabla_L g=gT.
\]
Consequently, $g$ has constant sign on each leaf $L$: either
$g\equiv0$ on $L$, or $g>0$ everywhere on $L$, or $g<0$
everywhere on $L$.
\end{lemma}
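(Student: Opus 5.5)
The plan is to obtain the identity from the symmetry of the Euclidean Hessian of $H$, combined with Lemma~\ref{lem:gradH}, and then to deduce the sign statement from a linear ODE along curves in the leaf.

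\emph{Step 1 (the identity).} By Lemma~\ref{lem:gradH}, $\nabla H=gN$ is a smooth vector field on $\R^{n+1}$. Since $\nabla$ is flat and torsion free, $\Hess H(X,Y)=\langle\nabla_X\nabla H,Y\rangle$ is symmetric in $X,Y$. Take $X$ tangent to the leaf and $Y=N$. On one side,
\[
\langle\nabla_X(gN),N\rangle=X(g)\langle N,N\rangle+g\langle\nabla_XN,N\rangle=X(g),
\]
because $\langle\nabla_XN,N\rangle=0$. On the other side,
\[
\langle\nabla_N(gN),X\rangle=N(g)\langle N,X\rangle+g\langle\nabla_NN,X\rangle=g\langle T,X\rangle,
\]
because $\langle N,X\rangle=0$. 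Equating the two gives $X(g)=g\langle T,X\rangle$. Since $T$ is tangent to the leaves, this is equivalent to $\nabla_L g=gT$.

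\emph{Step 2 (constant sign).} Fix a leaf $L$. It is connected in its leaf topology and is a manifold, so any two of its points are joined by a piecewise smooth curve $\gamma\colon[0,1]\to L$. Put $f=g\circ\gamma$. By Step~1, $f$ satisfies the linear ODE
\[
f'(t)=\langle T(\gamma(t)),\gamma'(t)\rangle f(t)
\]
on each smooth piece. Hence
\[
f(t)=f(0)\exp\Big(\int_0^t\langle T,\gamma'\rangle\,ds\Big),
\]
which vanishes identically if $f(0)=0$ and otherwise has the sign of $f(0)$. Therefore on $L$ either $g\equiv0$, or $g>0$ everywhere, or $g<0$ everywhere.

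There is no real obstacle here. The only point needing care is that Step~1 uses just the smoothness of $H$ and $N$ on the ambient space, so it holds pointwise regardless of whether the leaves are properly embedded. Step~2 uses only the intrinsic connectedness of the leaf.
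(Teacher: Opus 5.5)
Your proof is correct and complete. The paper does not prove this lemma itself; it imports it as Proposition~2 of Oshikiri, so your argument gives a self-contained derivation where the text relies on a citation.

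The core of your Step~1 is that, by Lemma~\ref{lem:gradH}, $dH=g\,N^\flat$. The symmetry of $\Hess H$ (equivalently $d(dH)=0$), evaluated on the pair $(X,N)$, then gives $X(g)=g\langle\nabla_N N,X\rangle$. Only torsion-freeness of the connection is used, so the flatness you invoke is unnecessary, and the same computation works in any Riemannian ambient manifold.

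Your ODE argument along piecewise smooth curves in $L$ correctly gives the sign trichotomy. As you note, it uses only the intrinsic connectedness of the leaf, consistent with the paper's standing remark that leaves need not be properly embedded.

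Compared with the bare citation, your version makes explicit that nothing beyond Lemma~\ref{lem:gradH} and the smoothness of $H$ and $N$ on $\R^{n+1}$ is needed. The reader therefore does not have to check that Oshikiri's hypotheses match the present setting. The citation keeps the paper shorter but leaves that verification to the reader.
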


The following two lemmas were proved by Barbosa, Kenmotsu, and Oshikiri in \cite{BarbosaKO-1991}.
\begin{lemma}[Proposition 2.14 in \cite{BarbosaKO-1991}]\label{lem:divT}
$\divg T=|A|^2-ng$.
\end{lemma}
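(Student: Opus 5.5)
The plan is a direct computation: write $\divg T$ in Euclidean coordinates, split it into a term involving $N(\divg N)$ and a term quadratic in $\nabla N$, and identify each term with the right-hand side using \eqref{eq:H} and the block structure of $A$. Since the ambient space is flat, no curvature term appears and no leafwise geometry is needed.

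Write $N=\sum_j N^j\partial_j$ in standard coordinates of $\R^{n+1}$, so $T=\nabla_NN=\sum_{i,j} N^j(\partial_j N^i)\partial_i$. Then, by the product rule,
\[
\divg T=\sum_{i,j}\partial_i\big(N^j\partial_jN^i\big)=\sum_{i,j}(\partial_iN^j)(\partial_jN^i)+\sum_{j}N^j\partial_j\Big(\sum_i\partial_iN^i\Big).
\]
The second sum is $N(\divg N)$. By \eqref{eq:H} this equals $N(-nH)=-nN(H)=-ng$. The first sum is $\tr\big((\nabla N)^2\big)$, where $\nabla N$ is the endomorphism $X\mapsto\nabla_XN$, whose matrix is $(\partial_iN^j)$. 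Since $\nabla N=-A$, this term equals $\tr(A^2)$, the trace taken over all of $T\R^{n+1}$. (This is the flat case of the general identity $\divg(\nabla_VV)=V(\divg V)+\tr((\nabla V)^2)+\mathrm{Ric}(V,V)$.)

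It remains to show $\tr(A^2)=|A|^2$. Take an orthonormal frame $e_1,\dots,e_n,N$ at a point, with $e_i$ tangent to the leaf. From the setup, $A$ maps into $TL$: $Ae_i\in TL$ and $AN=-T\in TL$. So in this frame the matrix of $A$ is block upper triangular,
\[
A=\begin{pmatrix} A|_{TL} & -T\\ 0 & 0\end{pmatrix},
\]
and its square has upper-left block $(A|_{TL})^2$ and lower-right entry $0$. Hence $\tr(A^2)=\tr\big((A|_{TL})^2\big)=|A|^2$. Combining the two pieces gives $\divg T=|A|^2-ng$.

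There is no real obstacle here. The only point needing care is this trace step: the off-diagonal column $-T$ must not contribute to $\tr(A^2)$, and it does not, precisely because the $N$-row of $A$ vanishes, i.e.\ $\langle\nabla_XN,N\rangle=0$ for every $X$.
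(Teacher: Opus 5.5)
Your computation is correct and complete. The paper gives no proof of its own here. It only cites Barbosa--Kenmotsu--Oshikiri, whose setting is a general Riemannian ambient space, where the corresponding identity carries an extra curvature term $\mathrm{Ric}(N,N)$ that vanishes in $\R^{n+1}$. So there is no internal argument to match against, and your proof stands on its own.

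Your route is the flat case of the general identity you mention,
\[
\divg(\nabla_NN)=N(\divg N)+\tr\big((\nabla N)^2\big)+\mathrm{Ric}(N,N).
\]
Commuting partial derivatives in standard coordinates replaces the curvature term. The remaining linear-algebra step is that $\tr(A^2)=\tr\big((A|_{TL})^2\big)$, because the $N$-row of $A$ vanishes. This step removes the off-diagonal column $-T$ without any leafwise frame calculus. In particular it avoids the usual bookkeeping with $\nabla_Ne_i$ and cancelling $|T|^2$ terms. It also does not need the symmetry of $A|_{TL}$, since the paper defines $|A|^2$ directly as $\tr\big((A|_{TL})^2\big)$.

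The cited source buys generality of the ambient manifold. Your argument buys a short, self-contained verification in $\R^{n+1}$. It uses only $\langle\nabla_XN,N\rangle=0$ and \eqref{eq:H}, which is all the paper actually needs.
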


\begin{lemma}[Proposition 2.14 in \cite{BarbosaKO-1991}]\label{lem:divX}
Let $X$ be a $C^1$ vector field on $\R^{n+1}$ tangent to the leaves. Then, along every leaf $L$,
\begin{equation}\label{eq:divX}
\divg X=\divg_LX-\langle X,T\rangle .
\end{equation}
\end{lemma}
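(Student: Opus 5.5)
Write $X=X^\top+\langle X,N\rangle N$ and compute $\divg X$ in an adapted orthonormal frame $\{e_1,\dots,e_n,N\}$ at a point $x$ of the leaf $L$, where $e_1,\dots,e_n$ span $T_xL$. By definition,
\[
\divg X=\sum_{i=1}^n\langle\nabla_{e_i}X,e_i\rangle+\langle\nabla_NX,N\rangle .
\]
The tangential sum involves only derivatives of $X$ in directions tangent to $L$, and $X$ restricted to $L$ is a $C^1$ tangent vector field on $L$. The Levi-Civita connection of the induced metric is the tangential projection of $\nabla$, so $\langle\nabla^L_{e_i}X,e_i\rangle=\langle\nabla_{e_i}X,e_i\rangle$. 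Hence the first sum is exactly $\divg_LX$.

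For the normal term we use that $\langle X,N\rangle\equiv0$ on $\R^{n+1}$, since $X$ is tangent to the leaves everywhere. Differentiating in the direction $N$ (legitimate because the identity holds on an open set, not just on $L$) gives
\[
0=N\langle X,N\rangle=\langle\nabla_NX,N\rangle+\langle X,\nabla_NN\rangle .
\]
Therefore $\langle\nabla_NX,N\rangle=-\langle X,T\rangle$ with $T=\nabla_NN$. Substituting into the frame expression gives \eqref{eq:divX}.

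There is no real obstacle. The only point needing care is the normal derivative: $\nabla_NX$ depends on $X$ off the leaf, and it is used only through the globally valid identity $\langle X,N\rangle=0$. The $C^1$ regularity of $X$, together with the smoothness of $N$, is exactly what is needed for both differentiations.
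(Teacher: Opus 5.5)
Your proof is correct and complete. The tangential trace equals $\divg_L X$ by the Gauss formula, since it only sees $X|_L$. The normal term satisfies $\langle\nabla_N X,N\rangle=-\langle X,\nabla_N N\rangle=-\langle X,T\rangle$, obtained by differentiating $\langle X,N\rangle\equiv 0$, which legitimately holds on all of $\R^{n+1}$.

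The paper gives no proof of this lemma and simply cites Proposition 2.14 of Barbosa--Kenmotsu--Oshikiri. Your adapted-frame computation makes the lemma self-contained. Because it is pointwise and local, it applies equally to leaves that are only injectively immersed, which is the setting the paper assumes.
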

The following lemma may be well-known, but we could not find a proper reference for it; thus, we present a proof here. 
\begin{lemma}\label{lem:lapL}
For $f\in C^\infty(\mathbb R^{n+1})$ and any leaf $L\in \mathfrak{F}$, we have
\begin{equation}\label{eq:lapL}
\Delta_L f = \operatorname{tr}_{TL}\operatorname{Hess} f
+nH\,N(f) \qquad\text{on }L.
\end{equation}
In particular, for $\rho(x):=\sqrt{1+|x|^2}$, one has $\Delta_L\rho\le n(1+|H|).$
\end{lemma}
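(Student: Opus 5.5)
The plan is to compare the intrinsic Laplacian of $L$ with the ambient Hessian using the Gauss formula, and then to specialise to $\rho$.

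\textbf{Step 1: the general identity.} Fix a point $p\in L$ and a local orthonormal frame $\{e_i\}$ of $TL$ near $p$. The leaf Laplacian is
\[
\Delta_L f=\sum_{i=1}^n\big(e_ie_i f-(\nabla^L_{e_i}e_i)f\big).
\]
By the Gauss formula, $\nabla_{e_i}e_i=\nabla^L_{e_i}e_i+\langle\nabla_{e_i}e_i,N\rangle N$. Because $e_i\perp N$, we have $\langle\nabla_{e_i}e_i,N\rangle=-\langle e_i,\nabla_{e_i}N\rangle=\langle Ae_i,e_i\rangle$. Summing over $i$ and using \eqref{eqtraco} gives
\[
\sum_i\nabla_{e_i}e_i=\sum_i\nabla^L_{e_i}e_i+nH\,N .
\]
We also have $\operatorname{Hess}f(e_i,e_i)=e_ie_if-(\nabla_{e_i}e_i)f$. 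Therefore
\[
\operatorname{tr}_{TL}\operatorname{Hess}f=\Delta_Lf-nH\,N(f),
\]
which is \eqref{eq:lapL}.

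The only delicate point is the sign convention relating $A=-\nabla N$ to the normal component of $\nabla_{e_i}e_i$. I will check it on a round sphere: with $N$ the inner normal, $H>0$, and for $f=|x|^2$ both sides of \eqref{eq:lapL} should agree.

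\textbf{Step 2: the bound for $\rho$.} For $\rho=\sqrt{1+|x|^2}$ we have
\[
\nabla\rho=\frac{x}{\rho},\qquad \operatorname{Hess}\rho=\frac{1}{\rho}\Big(I-\frac{x\otimes x}{\rho^2}\Big).
\]
The Hessian is positive semidefinite with operator norm at most $1/\rho\le1$. Hence $\operatorname{tr}_{TL}\operatorname{Hess}\rho\le n\|\operatorname{Hess}\rho\|\le n$.

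For the first-order term, $|N(\rho)|=|\langle x,N\rangle|/\rho\le |x|/\rho\le1$, so $nH\,N(\rho)\le n|H|$.

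Adding the two estimates gives $\Delta_L\rho\le n+n|H|=n(1+|H|)$.

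The main obstacle is only the sign bookkeeping in Step 1; the estimate in Step 2 is routine.
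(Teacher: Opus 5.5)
Your proof is correct and follows the paper's argument essentially step for step: the Gauss formula with $\langle\nabla_{e_i}e_i,N\rangle=\langle Ae_i,e_i\rangle$ yields \eqref{eq:lapL}, and then $0\le\Hess\rho\le I$ together with $|N(\rho)|\le 1$ gives $\Delta_L\rho\le n(1+|H|)$. The sphere check you propose (inner normal, $f=|x|^2$, where $0=2n-2n$) does confirm the sign, but your Step 1 derivation already settles it, so the check is not needed.
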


\begin{proof}
Let $\{e_1,\ldots,e_n\}$ be a local orthonormal frame of $TL$.
By the definition of the Laplace--Beltrami operator on $L$,
$$
\Delta_L f
=
\sum_{i=1}^n
\left(
e_i(e_i f)-(\nabla^L_{e_i}e_i)(f)
\right),
$$
where $\nabla^L$ denotes the Levi-Civita connection of the induced
metric on $L$. On the other hand, the ambient Hessian satisfies
$$
\operatorname{Hess}f(e_i,e_i)=e_i(e_i f)-(\nabla_{e_i}e_i)(f),
$$
where $\nabla$ is the Euclidean connection of $\mathbb R^{n+1}$. Since $\nabla_{e_i}e_i$ decomposes into its tangential and normal
components, we have
$$
\nabla_{e_i}e_i=\nabla^L_{e_i}e_i+\langle\nabla_{e_i}e_i,N\rangle N.
$$
Differentiating $\langle e_i,N\rangle=0$ along $e_i$, we obtain
$$
\langle\nabla_{e_i}e_i,N\rangle=-\langle e_i,\nabla_{e_i}N\rangle=\langle Ae_i,e_i\rangle,
$$
where $A(X)=-\nabla_XN$ is the shape operator. Consequently,
$$
\nabla_{e_i}e_i=\nabla^L_{e_i}e_i+\langle Ae_i,e_i\rangle N.
$$
It follows that
\begin{equation}
e_i(e_i f)-(\nabla^L_{e_i}e_i)(f)=\operatorname{Hess}f(e_i,e_i)+\langle Ae_i,e_i\rangle N(f).
\end{equation}
Summing over $i$, we get

\begin{eqnarray*}
\Delta_L f & = & \sum_{i=1}^n\operatorname{Hess}f(e_i,e_i)+\left(\sum_{i=1}^n\langle Ae_i,e_i\rangle\right)N(f)\\
& = & \operatorname{tr}_{TL}\operatorname{Hess}f
+nH\,N(f),    
\end{eqnarray*}
where we used the equation $\operatorname{tr}(A|_{TL})=nH$ (see Equation \eqref{eqtraco}). This proves the Equation \eqref{eq:lapL}.

We now compute the gradient and the Hessian of
$$
\rho(x):=\sqrt{1+|x|^2},
\qquad x=(x_1,\ldots,x_{n+1})\in\mathbb R^{n+1}.
$$

By the chain rule, for every $i=1,\ldots,n+1$, we have $\frac{\partial\rho}{\partial x_i} =\frac{x_i}{\rho}$, and thus
$$
\nabla\rho=\frac{x}{\rho}.
$$
Moreover,
\[
\frac{\partial^2\rho}{\partial x_i\partial x_j}
=
\frac{\delta_{ij}}{\rho}
-\frac{x_ix_j}{\rho^3}.
\]
Hence, the Hessian matrix is
\[
\operatorname{Hess}\rho
=
\frac1\rho I-\frac1{\rho^3}x\otimes x,
\]
where $I$ denotes the identity operator and $x\otimes x$ is the linear
operator defined by
\[
(x\otimes x)(v)=\langle x,v\rangle x,
\qquad v\in\mathbb R^{n+1}.
\]
Equivalently, for any $v,w\in\mathbb R^{n+1}$,
\[
\operatorname{Hess}\rho(v,w)
=
\frac{\langle v,w\rangle}{\rho}
-
\frac{\langle x,v\rangle\langle x,w\rangle}{\rho^3}.
\]
In particular,
\[
\operatorname{Hess}\rho(v,v)
=
\frac{|v|^2}{\rho}
-
\frac{\langle x,v\rangle^2}{\rho^3}.
\]
Therefore,
\[
\begin{aligned}
\operatorname{Hess}\rho(v,v)
&\ge
\left(\frac{1}{\rho}-\frac{|x|^2}{\rho^3}\right)|v|^2\\
&=\frac{\rho^2-|x|^2}{\rho^3}|v|^2\\
&=\frac{1}{\rho^3}|v|^2\ge0.
\end{aligned}
\]
Moreover, since $\rho\ge1$, we have
\[
\operatorname{Hess}\rho(v,v)
\le \frac{|v|^2}{\rho}
\le |v|^2.
\]
Hence, in the sense of quadratic forms,
\[
0\le\operatorname{Hess}\rho\le I.
\]
Taking the trace over an orthonormal basis
$\{e_1,\ldots,e_n\}$ of $TL$, we obtain
\[
\operatorname{tr}_{TL}\operatorname{Hess}\rho
=
\sum_{i=1}^n\operatorname{Hess}\rho(e_i,e_i)
\le
\sum_{i=1}^n|e_i|^2
=n.
\]
Furthermore,
\[
|N(\rho)|
=
|\langle\nabla\rho,N\rangle|
\le|\nabla\rho|
=
\frac{|x|}{\rho}
\le1.
\]
Applying \eqref{eq:lapL} to $\rho$, we conclude that
\[
\begin{aligned}
\Delta_L\rho
&=
\operatorname{tr}_{TL}\operatorname{Hess}\rho
+nH\,N(\rho)\\
&\le n+n|H|\,|N(\rho)|\\
&\le n(1+|H|).
\end{aligned}
\]
This completes the proof.
\end{proof}

The next result is straightforward, but for completeness, we present a proof here. 

\begin{lemma}\label{lem:hessian}
Let $\chi\in C^{\infty}(\mathbb{R}^{n+1})$ and let $q\geq 4$ be an
integer. Then
\[
\operatorname{Hess}(\chi^q)
=
q(q-1)\chi^{q-2}\,d\chi\otimes d\chi
+
q\chi^{q-1}\operatorname{Hess}\chi.
\]
Here, the Hessian is regarded as a symmetric bilinear form, and
$d\chi\otimes d\chi$ denotes the bilinear form defined by
\[
(d\chi\otimes d\chi)(v,w)=d\chi(v)d\chi(w).
\]
\end{lemma}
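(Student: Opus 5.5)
The plan is to compute the Hessian of $\chi^q$ directly from its definition as a symmetric bilinear form, $\operatorname{Hess}f(v,w)=v(w f)-(\nabla_v w)(f)$, with $v,w$ extended to smooth vector fields. The whole identity then follows from two applications of the chain rule. We never use that $q\ge 4$ is an integer beyond the fact that $t\mapsto t^q$ is smooth, so the hypothesis $q\geq 4$ matters only later, where the nonnegative powers $\chi^{q-2}$ and $\chi^{q-1}$ will be needed.

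\textbf{Step 1 (first derivatives).} For any vector field $w$, the one-variable chain rule gives
\[
w(\chi^q)=q\chi^{q-1}\,w(\chi)=q\chi^{q-1}\,d\chi(w).
\]
In particular $d(\chi^q)=q\chi^{q-1}d\chi$ and $\nabla(\chi^q)=q\chi^{q-1}\nabla\chi$.

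\textbf{Step 2 (second derivatives).} Differentiate the expression from Step 1 along $v$, using the Leibniz rule:
\[
v\big(w(\chi^q)\big)=q(q-1)\chi^{q-2}\,v(\chi)\,w(\chi)+q\chi^{q-1}\,v(w\chi).
\]
Subtract $(\nabla_vw)(\chi^q)=q\chi^{q-1}(\nabla_vw)(\chi)$. The result is
\[
\operatorname{Hess}(\chi^q)(v,w)=q(q-1)\chi^{q-2}\,d\chi(v)\,d\chi(w)+q\chi^{q-1}\big(v(w\chi)-(\nabla_vw)\chi\big).
\]
The bracket on the right is exactly $\operatorname{Hess}\chi(v,w)$, so this is the stated formula.

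An equivalent route is the coordinate computation in $\R^{n+1}$: apply $\partial_i$ to $\partial_j(\chi^q)=q\chi^{q-1}\partial_j\chi$ to get $\partial_i\partial_j(\chi^q)=q(q-1)\chi^{q-2}\partial_i\chi\,\partial_j\chi+q\chi^{q-1}\partial_i\partial_j\chi$. Either version works.

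Nothing here is a real obstacle. The only point needing care is that the Hessian is tensorial, so the value at a point does not depend on how $v,w$ are extended. This holds because the Euclidean connection is torsion-free, and it also gives the symmetry of both sides.
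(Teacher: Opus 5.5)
Your proof is correct and is essentially the paper's argument. The paper differentiates $\nabla(\chi^q)=q\chi^{q-1}\nabla\chi$ along $v$ and pairs with $w$, while you differentiate $w(\chi^q)=q\chi^{q-1}w(\chi)$ along $v$ and subtract $(\nabla_vw)(\chi^q)$; this is the same chain-rule-plus-Leibniz computation (as is your coordinate version), though note that tensoriality of $v(wf)-(\nabla_vw)f$ holds for any connection, and torsion-freeness is what gives symmetry.
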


\begin{proof}
Let $v,w\in\mathbb{R}^{n+1}$ be arbitrary vectors. By the
definition of the Hessian,
\[
\operatorname{Hess}(\chi^q)(v,w)
=
\left\langle
D_v\bigl(\nabla(\chi^q)\bigr),w
\right\rangle.
\]
By the chain rule,
\[
\nabla(\chi^q)=q\chi^{q-1}\nabla\chi.
\]
Differentiating this identity in the direction of $v$ and applying
the product rule, we obtain
\[
D_v\bigl(\nabla(\chi^q)\bigr)
=
q(q-1)\chi^{q-2}d\chi(v)\nabla\chi
+
q\chi^{q-1}D_v(\nabla\chi).
\]
Taking the inner product with $w$ gives
\begin{align*}
\operatorname{Hess}(\chi^q)(v,w)
={}&q(q-1)\chi^{q-2}
d\chi(v)\langle\nabla\chi,w\rangle
+q\chi^{q-1}
\langle D_v(\nabla\chi),w\rangle.
\end{align*}
Since
\[
d\chi(w)=\langle\nabla\chi,w\rangle
\]
and
\[
\operatorname{Hess}\chi(v,w)
=
\langle D_v(\nabla\chi),w\rangle,
\]
we conclude that
\[
\begin{aligned}
\operatorname{Hess}(\chi^q)(v,w)
={}&q(q-1)\chi^{q-2}d\chi(v)d\chi(w)
+q\chi^{q-1}\operatorname{Hess}\chi(v,w).
\end{aligned}
\]
By the definition of $d\chi\otimes d\chi$, this is equivalent to
\[
\operatorname{Hess}(\chi^q)
=
q(q-1)\chi^{q-2}\,d\chi\otimes d\chi
+
q\chi^{q-1}\operatorname{Hess}\chi.
\]
Since $v$ and $w$ were arbitrary, the identity holds everywhere
on $\mathbb{R}^{n+1}$.
\end{proof}

\section{Key results}\label{sec:flux}
In this Section, we now introduce the main tools and results needed to establish our main theorem. 

Throughout this section and the next, we fix an odd integer \(p>n\) (for instance, \(p=n+1\) if \(n\) is even and \(p=n+2\) if \(n\) is odd), and set \(q:=p+1\). Thus, \(p\geq 3\), and both \(p-1\) and \(q\) are even.

\subsection{A vector field with nonnegative divergence}

\begin{lemma}\label{lem:divV}
The vector field
\begin{equation}\label{eq:V}
V:=H^pN+\frac{p}{n}H^{p-1}T
\end{equation}
is smooth on $\mathbb R^{n+1}$, and
\begin{equation}\label{eq:divV}
\operatorname{div}V
=
\frac{p}{n}H^{p-1}|A|^2-nH^{p+1}
\ge (p-n)H^{p+1}
=(p-n)|H|^{p+1}
\ge0.
\end{equation}
\end{lemma}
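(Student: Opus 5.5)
Smoothness is immediate: $H$ is smooth by \eqref{eq:H}, $N$ is smooth, and $T=\nabla_NN$ is smooth. Since $p\ge 3$, the powers $H^p$ and $H^{p-1}$ are smooth, so $V$ is smooth.

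For the divergence we use the product rule $\divg(fY)=Y(f)+f\,\divg Y$ term by term.

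For the first term, Lemma~\ref{lem:gradH} gives $N(H)=g$. Together with \eqref{eq:H},
\[
\divg(H^pN)=pH^{p-1}g-nH^{p+1}.
\]

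For the second term we need $T(H^{p-1})$. Since $T$ is tangent to the leaves and $\nabla H=gN$, we have $T(H)=0$, so $T(H^{p-1})=0$. Lemma~\ref{lem:divT} then gives
\[
\divg\Big(\tfrac pn H^{p-1}T\Big)=\tfrac pn H^{p-1}\big(|A|^2-ng\big)=\tfrac pnH^{p-1}|A|^2-pH^{p-1}g.
\]

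Adding the two, the $g$-terms cancel. This cancellation is the whole point of the coefficient $p/n$, and it yields the claimed identity
\[
\divg V=\tfrac pnH^{p-1}|A|^2-nH^{p+1}.
\]

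For the inequality, $p-1$ is even, so $H^{p-1}\ge0$. Multiplying \eqref{eq:CS_long} by $\tfrac pnH^{p-1}$ gives
\[
\tfrac pnH^{p-1}|A|^2\ge pH^{p+1}.
\]
Hence $\divg V\ge (p-n)H^{p+1}$. Since $p+1$ is even, $H^{p+1}=|H|^{p+1}$, and $p>n$ makes this nonnegative.

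The only delicate point is keeping track of the signs of the powers and of the $g$-cancellation; there is no real obstacle beyond that.
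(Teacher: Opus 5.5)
Your proof is correct and follows the paper's argument essentially step for step. Like the paper, you apply the product rule to each term, use $\nabla H=gN$ and $T\perp N$ to kill $T(H^{p-1})$, invoke Lemma~\ref{lem:divT} so that the $g$-terms cancel, and then combine $|A|^2\ge nH^2$ with the evenness of $p-1$ and $p+1$ to get the inequality.
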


\begin{proof}
Since $H$, $N$, and $T$ are smooth and $p$ is an integer, the
vector field $V$ defined in \eqref{eq:V} is smooth on
$\mathbb R^{n+1}$. We first compute the divergence of each term in $V$. By the
product rule for the divergence,
\[
\operatorname{div}(H^pN)
=
\langle\nabla(H^p),N\rangle
+H^p\operatorname{div}N.
\]
Using the chain rule and the identities 
$\nabla H=gN$ (see Lemma \ref{lem:gradH}) and $\operatorname{div}N=-nH$, we obtain
\[
\begin{aligned}
\operatorname{div}(H^pN)
&=pH^{p-1}\langle\nabla H,N\rangle
-nH^{p+1}\\
&=pH^{p-1}g\langle N,N\rangle
-nH^{p+1}\\
&=pH^{p-1}g-nH^{p+1},
\end{aligned}
\]
where we used $|N|=1$.

Next, applying the product rule to the second term gives
\[
\operatorname{div}(H^{p-1}T)
=
\langle\nabla(H^{p-1}),T\rangle
+H^{p-1}\operatorname{div}T.
\]
Since
\[
\nabla(H^{p-1})
=(p-1)H^{p-2}\nabla H
=(p-1)H^{p-2}gN
\]
and $T\perp N$, we have
\[
\langle\nabla(H^{p-1}),T\rangle
=(p-1)H^{p-2}g\langle N,T\rangle=0.
\]
Therefore, by Lemma~\ref{lem:divT},
\[
\begin{aligned}
\operatorname{div}\left(\frac{p}{n}H^{p-1}T\right)
&=\frac{p}{n}H^{p-1}\operatorname{div}T\\
&=\frac{p}{n}H^{p-1}(|A|^2-ng)\\
&=\frac{p}{n}H^{p-1}|A|^2
-pH^{p-1}g.
\end{aligned}
\]

Adding the two divergence identities, we find that
$$
\operatorname{div}V=pH^{p-1}g-nH^{p+1}+\frac{p}{n}H^{p-1}|A|^2-pH^{p-1}g=\frac{p}{n}H^{p-1}|A|^2-nH^{p+1}.
$$
Thus, the terms involving $g$ cancel identically, and the
equality in \eqref{eq:divV} follows without any assumption on
the signs of $g$ or $H$.

It remains to establish the inequality. By the
Cauchy--Schwarz inequality,
\[
\bigl(\operatorname{tr}(A|_{TL})\bigr)^2
\le n\,|A|^2.
\]
Since $\operatorname{tr}(A|_{TL})=nH$, this yields
\begin{equation}\label{eq:CS}
|A|^2\ge nH^2.
\end{equation}
Because $p$ is odd, $p-1$ is even, and hence
$H^{p-1}\ge0$. Multiplying \eqref{eq:CS} by
$\frac{p}{n}H^{p-1}\ge0$, we obtain
\[
\frac{p}{n}H^{p-1}|A|^2
\ge pH^{p+1}.
\]
Consequently,
\[
\begin{aligned}
\operatorname{div}V
&=\frac{p}{n}H^{p-1}|A|^2-nH^{p+1}\\
&\ge pH^{p+1}-nH^{p+1}\\
&=(p-n)H^{p+1}.
\end{aligned}
\]
Finally, since $p+1$ is even,
\[
H^{p+1}=|H|^{p+1}.
\]
Moreover, $p>n$ implies $p-n>0$. Therefore,
\[
\operatorname{div}V
\ge(p-n)|H|^{p+1}\ge0,
\]
which completes the proof.
\end{proof}

\subsection{Removing \texorpdfstring{$T$}{T} from boundary terms}

The field $T$ is not a priori bounded, so terms of the form $\langle\nabla\chi,T\rangle$ must be handled with care. The following identity trades $T$ for Hessians and normal derivatives of a test function.

\begin{lemma}\label{lem:T}
Let $\chi\in C^\infty_c(\mathbb R^{n+1})$ and let
$\kappa\in C^\infty(\mathbb R^{n+1})$ be constant along the
leaves of the foliation. Then
\begin{equation}\label{eq:T}
\begin{aligned}
\kappa\langle\nabla\chi,T\rangle
={}&\kappa\,\operatorname{tr}_{TL}\operatorname{Hess}\chi
+nH\kappa\,N(\chi)
-\operatorname{div}(\kappa\nabla_L\chi),
\end{aligned}
\end{equation}
where
\[
\nabla_L\chi:=\nabla\chi-N(\chi)N
\]
is the leafwise gradient of $\chi$. In particular,
$\kappa\nabla_L\chi$ is a smooth compactly supported vector
field on $\mathbb R^{n+1}$ tangent to the leaves, and
\[
\int_{\mathbb R^{n+1}}
\operatorname{div}(\kappa\nabla_L\chi)\,d x=0.
\]
\end{lemma}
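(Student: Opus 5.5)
The plan is to compute $\operatorname{div}(\kappa\nabla_L\chi)$ directly with the product rule and the earlier lemmas, and then rearrange. Write $X:=\nabla_L\chi=\nabla\chi-N(\chi)N$. This field is smooth and tangent to the leaves, and it has compact support because $\chi$ does; the same holds for $\kappa X$. First apply the product rule:
\[
\operatorname{div}(\kappa X)=\langle\nabla\kappa,X\rangle+\kappa\operatorname{div}X .
\]
Since $\kappa$ is constant along leaves, the argument of Lemma~\ref{lem:gradH} shows that $\nabla\kappa$ is parallel to $N$. Because $X\perp N$, the first term vanishes, leaving $\operatorname{div}(\kappa X)=\kappa\operatorname{div}X$.

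Next, I would invoke Lemma~\ref{lem:divX}, which gives $\operatorname{div}X=\operatorname{div}_LX-\langle X,T\rangle$ along each leaf. On a leaf $L$, the restriction of $X$ is exactly the leaf gradient $\nabla^L(\chi|_L)$, so $\operatorname{div}_LX=\Delta_L\chi$. Lemma~\ref{lem:lapL} then gives
\[
\operatorname{div}_LX=\operatorname{tr}_{TL}\operatorname{Hess}\chi+nH\,N(\chi).
\]
For the other term, $T$ is tangent to the leaves, so $\langle X,T\rangle=\langle\nabla\chi,T\rangle-N(\chi)\langle N,T\rangle=\langle\nabla\chi,T\rangle$. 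Combining these,
\[
\operatorname{div}(\kappa\nabla_L\chi)=\kappa\operatorname{tr}_{TL}\operatorname{Hess}\chi+nH\kappa N(\chi)-\kappa\langle\nabla\chi,T\rangle ,
\]
and rearranging yields \eqref{eq:T}.

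The integral statement follows from the divergence theorem on a large ball containing the support of $\kappa\nabla_L\chi$. The boundary flux vanishes there, so the integral of the divergence is zero.

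The only point that needs care is justifying the use of Lemma~\ref{lem:divX}, which is stated for $C^1$ fields tangent to the leaves. I will check that $X$ is smooth globally, which holds because $N$ is smooth. I will also check that $\operatorname{div}_L$ of the restriction coincides with the leaf Laplacian of $\chi|_L$. This identification is pointwise and needs neither properness of the leaves nor any global property, so I expect it to be routine. The rest is algebra.
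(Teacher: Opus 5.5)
Your proposal is correct and follows essentially the same route as the paper. You apply Lemma~\ref{lem:divX} to $\nabla_L\chi$, identify $\operatorname{div}_L(\nabla_L\chi)$ with $\Delta_L\chi$ and expand it via Lemma~\ref{lem:lapL}, use $\nabla\kappa\parallel N$ to move $\kappa$ inside the divergence, and finish with the divergence theorem on a large ball. The only difference is that you apply the product rule to $\operatorname{div}(\kappa\nabla_L\chi)$ at the start rather than at the end, which is immaterial.
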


\begin{proof}
We first observe that
\[
\nabla_L\chi=\nabla\chi-N(\chi)N
\]
is the orthogonal projection of $\nabla\chi$ onto the tangent
space of each leaf. In particular,
$
\langle\nabla_L\chi,N\rangle=0.
$
Since $T$ is tangent to the leaves, we also have
\[
\langle\nabla\chi,T\rangle
=
\langle\nabla_L\chi,T\rangle.
\]
On each leaf $L$, the restriction of $\nabla_L\chi$ is the
intrinsic gradient of $\chi|_L$. Therefore,
\[
\operatorname{div}_L(\nabla_L\chi)=\Delta_L\chi.
\]
Applying Lemma~\ref{lem:divX} to the tangential vector field
$X=\nabla_L\chi$, we obtain
\[
\operatorname{div}(\nabla_L\chi)
=
\operatorname{div}_L(\nabla_L\chi)
-\langle\nabla_L\chi,T\rangle.
\]
Equivalently,
\[
\langle\nabla_L\chi,T\rangle
=
\Delta_L\chi-\operatorname{div}(\nabla_L\chi).
\]
Consequently,
\[
\langle\nabla\chi,T\rangle
=
\Delta_L\chi-\operatorname{div}(\nabla_L\chi).
\]

By Lemma~\ref{lem:lapL}, the intrinsic Laplacian satisfies
\[
\Delta_L\chi
=
\operatorname{tr}_{TL}\operatorname{Hess}\chi
+nH\,N(\chi).
\]
Substituting this identity, we conclude that
\[
\begin{aligned}
\langle\nabla\chi,T\rangle
={}&\operatorname{tr}_{TL}\operatorname{Hess}\chi
+nH\,N(\chi)
-\operatorname{div}(\nabla_L\chi).
\end{aligned}
\]

Multiplying by $\kappa$, we obtain
\[
\begin{aligned}
\kappa\langle\nabla\chi,T\rangle
={}&\kappa\,\operatorname{tr}_{TL}\operatorname{Hess}\chi
+nH\kappa\,N(\chi)-\kappa\operatorname{div}(\nabla_L\chi).
\end{aligned}
\]

It remains to rewrite the last term. By the product rule
for the divergence,
\[
\operatorname{div}(\kappa\nabla_L\chi)
=
\langle\nabla\kappa,\nabla_L\chi\rangle
+\kappa\operatorname{div}(\nabla_L\chi).
\]
Since $\kappa$ is constant along the leaves, its derivative
in every tangential direction vanishes. Thus,
\[
\langle\nabla\kappa,X\rangle=0
\qquad\text{for every }X\in TL.
\]
Equivalently, $\nabla\kappa$ is parallel to $N$. Because
$\nabla_L\chi$ is tangent to the leaves, it follows that
\[
\langle\nabla\kappa,\nabla_L\chi\rangle=0.
\]
Hence,
\[
\operatorname{div}(\kappa\nabla_L\chi)
=
\kappa\operatorname{div}(\nabla_L\chi).
\]
Substituting this equality into the preceding expression
gives
\[
\begin{aligned}
\kappa\langle\nabla\chi,T\rangle
={}&\kappa\,\operatorname{tr}_{TL}\operatorname{Hess}\chi
+nH\kappa\,N(\chi)-\operatorname{div}(\kappa\nabla_L\chi),
\end{aligned}
\]
which proves \eqref{eq:T}.

Finally, since $\chi$ is compactly supported and
$\nabla_L\chi$ is obtained from $\nabla\chi$ by orthogonal
projection, $\nabla_L\chi$ is also compactly supported.
Multiplication by the smooth function $\kappa$ preserves
compact support. Thus $\kappa\nabla_L\chi$ is a smooth
compactly supported vector field on $\mathbb R^{n+1}$.
The divergence theorem, applied on a sufficiently large
ball containing its support, yields
\[
\int_{\mathbb R^{n+1}}
\operatorname{div}(\kappa\nabla_L\chi)\,dx=0.
\]
This completes the proof.
\end{proof}

\subsection{The integral inequality}

\begin{proposition}\label{prop:int}
For every $\chi\in C^\infty_c(\mathbb R^{n+1})$, we have
\begin{equation}\label{eq:int}
\begin{aligned}
(p-n)\int_{\mathbb R^{n+1}}\chi^q H^{p+1}
\le{}&(p+1)\int_{\mathbb R^{n+1}}
|H|^p\,|\nabla(\chi^q)|+p\int_{\mathbb R^{n+1}}
H^{p-1}\,\|\operatorname{Hess}\chi^q\|.
\end{aligned}
\end{equation}
\end{proposition}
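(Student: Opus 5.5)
Test the divergence inequality of Lemma~\ref{lem:divV} against $\chi^q$ and integrate by parts. Since $q$ is even, $\chi^q\ge0$, so Lemma~\ref{lem:divV} gives
\[
(p-n)\int\chi^qH^{p+1}\le\int\chi^q\operatorname{div}V=-\int\langle\nabla(\chi^q),V\rangle ,
\]
because $\chi^qV$ is smooth with compact support. By \eqref{eq:V},
\[
-\int\langle\nabla(\chi^q),V\rangle=-\int H^pN(\chi^q)-\frac pn\int H^{p-1}\langle\nabla(\chi^q),T\rangle .
\]
The first term is bounded by $\int|H|^p|\nabla(\chi^q)|$, since $|N|=1$.

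The second term contains $T$, which is not a priori bounded, so I would remove it with Lemma~\ref{lem:T}. Apply that lemma with $\kappa=H^{p-1}$, which is constant along the leaves, and with $\chi^q$ in place of $\chi$; this is allowed since $\chi^q\in C^\infty_c$. Integrating, the divergence term drops out and
\[
\int H^{p-1}\langle\nabla(\chi^q),T\rangle=\int H^{p-1}\operatorname{tr}_{TL}\operatorname{Hess}(\chi^q)+n\int H^{p}N(\chi^q).
\]
Multiply by $-\frac pn$ and take absolute values. Using $|\operatorname{tr}_{TL}S|\le n\|S\|$ and $H^{p-1}\ge0$ (as $p-1$ is even), the second term is bounded by
\[
p\int H^{p-1}\|\operatorname{Hess}\chi^q\|+p\int|H|^p|\nabla(\chi^q)| .
\]

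Adding this to the bound on the first term gives the coefficient $1+p=p+1$ on $\int|H|^p|\nabla(\chi^q)|$ and the coefficient $p$ on the Hessian term, which is exactly \eqref{eq:int}. Every integrand is continuous and compactly supported, so all the integrals are finite and the manipulations are justified.

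The only delicate point is the handling of $T$ in the second term. This is done by Lemma~\ref{lem:T}, and it is valid precisely because $\kappa=H^{p-1}$ is leafwise constant, so $\nabla\kappa$ is orthogonal to $\nabla_L\chi^q$. The rest is bookkeeping of signs and constants. Note that the sign of the $N(\chi^q)$ contributions does not matter, since each is bounded in absolute value.
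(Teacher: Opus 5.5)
Your proposal is correct and follows the paper's proof essentially step for step: use $\int\operatorname{div}(\chi^qV)=0$, remove $T$ via Lemma~\ref{lem:T}, and bound the trace and normal-derivative terms using $H^{p-1}\ge0$ and $|\operatorname{tr}_{TL}S|\le n\|S\|$. The differences are cosmetic: you take $\kappa=H^{p-1}$ rather than $\frac pn H^{p-1}$, and you bound the two $N(\chi^q)$ contributions separately instead of first merging them into $-(p+1)\int H^pN(\chi^q)$, which gives the same constants.
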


\begin{proof}
Let $\chi\in C^\infty_c(\mathbb R^{n+1})$ and recall that
$q=p+1$. Consider the vector field
\[
W:=\chi^qV,
\]
where
$
V=H^pN+\frac{p}{n}H^{p-1}T
$
is the smooth vector field defined in Lemma~\ref{lem:divV}.
Since $\chi$ has compact support, $W$ is a smooth compactly
supported vector field on $\mathbb R^{n+1}$. Hence, by the
divergence theorem,
\begin{equation}\label{eq:integraldivW}
\int_{\mathbb R^{n+1}}\operatorname{div}W=0.
\end{equation}

By the product rule for the divergence,
\[
\operatorname{div}W
=
\chi^q\operatorname{div}V
+\langle\nabla(\chi^q),V\rangle.
\]
Using the definition of $V$, we obtain
\[
\begin{aligned}
\operatorname{div}W
={}&\chi^q\operatorname{div}V
+H^pN(\chi^q)+\frac{p}{n}H^{p-1}
\langle\nabla(\chi^q),T\rangle.
\end{aligned}
\]

We now apply Lemma~\ref{lem:T} with the test function $\chi^q$
and the function
\[
\kappa:=\frac{p}{n}H^{p-1}.
\]
The function $\kappa$ is smooth and constant along the leaves,
because $H$ is constant on each leaf. Thus, Lemma~\ref{lem:T}
gives

\begin{equation*}
\frac{p}{n}H^{p-1}
\langle\nabla(\chi^q),T\rangle  =  \frac{p}{n}H^{p-1}
\operatorname{tr}_{TL}\operatorname{Hess}(\chi^q)
+pH^pN(\chi^q)-
\operatorname{div}\left(
\frac{p}{n}H^{p-1}\nabla_L(\chi^q)
\right).    
\end{equation*}

Substituting this identity into the expression for
$\operatorname{div}W$, we find

\begin{equation*}
\operatorname{div}W=\chi^q\operatorname{div}V
+(p+1)H^pN(\chi^q)+\frac{p}{n}H^{p-1}
\operatorname{tr}_{TL}\operatorname{Hess}(\chi^q)-\operatorname{div}\left(
\frac{p}{n}H^{p-1}\nabla_L(\chi^q)
\right).    
\end{equation*}

Integrating over $\mathbb R^{n+1}$, the divergence theorem
and the compact support of $\chi$ imply
\[
\int_{\mathbb R^{n+1}}
\operatorname{div}\left(
\frac{p}{n}H^{p-1}\nabla_L(\chi^q)
\right)=0.
\]
Together with \eqref{eq:integraldivW}, this yields
\begin{equation}\label{eq:intid}
\begin{aligned}
\int_{\mathbb R^{n+1}}\chi^q\operatorname{div}V
={}&-(p+1)\int_{\mathbb R^{n+1}}H^pN(\chi^q)-\frac{p}{n}\int_{\mathbb R^{n+1}}
H^{p-1}\operatorname{tr}_{TL}
\operatorname{Hess}(\chi^q).
\end{aligned}
\end{equation}

We next estimate both sides of \eqref{eq:intid}. Since $q$ is
even, $\chi^q\ge0$. Therefore, by inequality \eqref{eq:divV} in Lemma~\ref{lem:divV} implies
\[
\begin{aligned}
(p-n)\int_{\mathbb R^{n+1}}\chi^qH^{p+1}
&\le
\int_{\mathbb R^{n+1}}\chi^q\operatorname{div}V.
\end{aligned}
\]
Combining this inequality with \eqref{eq:intid}, we obtain
\[
\begin{aligned}
&(p-n)\int_{\mathbb R^{n+1}}\chi^qH^{p+1} \le
-(p+1)\int_{\mathbb R^{n+1}}H^pN(\chi^q)-\frac{p}{n}\int_{\mathbb R^{n+1}}
H^{p-1}\operatorname{tr}_{TL}
\operatorname{Hess}(\chi^q).
\end{aligned}
\]

To estimate the first integral on the right-hand side, we use
the fact that $N$ is a unit vector field. Thus,
\[
|N(\chi^q)|
=
|\langle\nabla(\chi^q),N\rangle|
\le|\nabla(\chi^q)|.
\]
Consequently,
\[
-\int_{\mathbb R^{n+1}}H^pN(\chi^q)
\le
\int_{\mathbb R^{n+1}}
|H|^p\,|\nabla(\chi^q)|.
\]

For the second integral, recall that $p-1$ is even, so
$H^{p-1}\ge0$. Moreover, for any symmetric endomorphism $S$
of an $n$-dimensional inner product space,
\[
|\operatorname{tr}S|\le n\|S\|,
\]
where $\|S\|$ denotes the operator norm. Applying this
inequality to the restriction of
$\operatorname{Hess}(\chi^q)$ to $TL$, we obtain
\[
\left|
\operatorname{tr}_{TL}\operatorname{Hess}(\chi^q)
\right|
\le
n\|\operatorname{Hess}(\chi^q)\|.
\]
It follows that
\[
\begin{aligned}
&-\frac{p}{n}\int_{\mathbb R^{n+1}}
H^{p-1}\operatorname{tr}_{TL}
\operatorname{Hess}(\chi^q)\le
p\int_{\mathbb R^{n+1}}
H^{p-1}\|\operatorname{Hess}(\chi^q)\|.
\end{aligned}
\]

Combining the preceding estimates, we conclude that
\[
\begin{aligned}
&(p-n)\int_{\mathbb R^{n+1}}\chi^qH^{p+1}\le
(p+1)\int_{\mathbb R^{n+1}}
|H|^p\,|\nabla(\chi^q)|+
p\int_{\mathbb R^{n+1}}
H^{p-1}\|\operatorname{Hess}(\chi^q)\|.
\end{aligned}
\]
This proves \eqref{eq:int}.
\end{proof}

\subsection{Estimates for the gradient and Hessian}\label{subsec:estimates}

Fix $\chi_0\in C^\infty([0,\infty))$ such that
\[
0\le\chi_0\le1,\qquad
\chi_0=1\text{ on }[0,1],\qquad
\chi_0=0\text{ on }[2,\infty),
\]
and set
\[
C_0:=
\sup_{s\ge0}|\chi_0'(s)|
+
\sup_{s\ge0}|\chi_0''(s)|.
\]
For $R>0$, define
\[
\chi_R(x):=\chi_0\left(\frac{|x|}{R}\right),
\qquad x\in\mathbb R^{n+1}.
\]
Then $\chi_R\in C^\infty_c(\mathbb R^{n+1})$. Indeed,
$\chi_R$ is constant in a neighborhood of the origin and
vanishes whenever $|x|\ge2R$. Moreover,
\[
0\le\chi_R\le1,\qquad
\chi_R=1\text{ on }B_R,\qquad
\operatorname{supp}\chi_R\subset\overline{B_{2R}},
\]
where $B_R$ denotes the open ball of radius $R$ centered at
the origin.

\begin{lemma}Under the above notation, we have
\begin{equation}\label{eq:cutoff}
|\nabla\chi_R|\le\frac{C_0}{R},
\qquad
\|\operatorname{Hess}\chi_R\|\le\frac{C_0}{R^2}.
\end{equation}
\end{lemma}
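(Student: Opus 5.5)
The plan is to compute $\nabla\chi_R$ and $\operatorname{Hess}\chi_R$ explicitly, using that $\chi_R$ is a radial function. I would split the argument according to whether $|x|<R$ or $|x|\ge R$.

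On the open ball $B_R$ we have $|x|/R<1$, so $\chi_R\equiv1$ there. Hence $\nabla\chi_R$ and $\operatorname{Hess}\chi_R$ both vanish on $B_R$, and \eqref{eq:cutoff} is trivial. This also removes the only point where $|x|$ fails to be smooth, namely the origin.

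On the region $|x|\geq R$ (in fact on $x\neq0$), write $r=|x|$ and $\nu=x/r$. The chain rule gives
\[
\nabla\chi_R=\frac{1}{R}\chi_0'\!\left(\frac rR\right)\nu ,
\]
so $|\nabla\chi_R|\le \sup|\chi_0'|/R\le C_0/R$.

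For the Hessian, I would differentiate once more, using $D\nu=\frac1r\bigl(I-\nu\otimes\nu\bigr)$. This gives
\[
\operatorname{Hess}\chi_R=\frac{1}{R^2}\chi_0''\!\left(\frac rR\right)\nu\otimes\nu+\frac{1}{Rr}\chi_0'\!\left(\frac rR\right)\bigl(I-\nu\otimes\nu\bigr).
\]
This is a symmetric endomorphism diagonal in an orthonormal basis adapted to $\nu$. It has eigenvalue $\chi_0''(r/R)/R^2$ in the radial direction and eigenvalue $\chi_0'(r/R)/(Rr)$ with multiplicity $n$ in the orthogonal directions. Its operator norm is therefore the larger of these two absolute values.

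The only step needing care is the tangential eigenvalue $\chi_0'(r/R)/(Rr)$, which carries the factor $1/r$ and could blow up near the origin. This is handled by the support of $\chi_0'$: since $\chi_0'=0$ on $[0,1)$, this term is nonzero only when $r\ge R$. There $1/(Rr)\le 1/R^2$, so the term is bounded by $\sup|\chi_0'|/R^2$. The radial eigenvalue is bounded by $\sup|\chi_0''|/R^2$. Both are at most $C_0/R^2$, which gives $\|\operatorname{Hess}\chi_R\|\le C_0/R^2$ and completes \eqref{eq:cutoff}.
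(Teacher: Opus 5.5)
Your proposal is correct and follows essentially the same route as the paper. Both compute $\nabla\chi_R$ and $\operatorname{Hess}\chi_R$ explicitly via $D\nu=\frac1r(I-\nu\otimes\nu)$, take the operator norm as the larger of the radial and tangential eigenvalues, and use the vanishing of $\chi_0'$ on $[0,1]$ to get $1/(Rr)\le 1/R^2$ wherever the tangential term is nonzero.
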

\begin{proof}
    To prove these estimates, let $x\ne0$ and write
\[
r:=|x|,
\qquad
\widehat{x}:=\frac{x}{r}.
\]
Thus $|\widehat{x}|=1$. Since $r=\left(\sum_{i=1}^{n+1}x_i^2\right)^{1/2},$ we have
\[
\frac{\partial r}{\partial x_i}
=
\frac{x_i}{r},
\qquad i=1,\ldots,n+1.
\]
Consequently,
\[
\nabla r=\frac{x}{r}=\widehat{x}.
\]

By the chain rule,
\[
\begin{aligned}
\nabla\chi_R
&=
\chi_0'\left(\frac{r}{R}\right)
\nabla\left(\frac{r}{R}\right)\\
&=
\frac{1}{R}\chi_0'\left(\frac{r}{R}\right)\nabla r\\
&=
\frac{\chi_0'(r/R)}{R}\widehat{x}.
\end{aligned}
\]
Since $|\widehat{x}|=1$, it follows that
\[
|\nabla\chi_R|
=
\frac{1}{R}\left|\chi_0'\left(\frac{r}{R}\right)\right|
\le
\frac{\sup_{s\ge0}|\chi_0'(s)|}{R}
\le\frac{C_0}{R}.
\]
This proves the first estimate in \eqref{eq:cutoff}.

We now compute the Hessian. Recall that, for vectors
$u,v\in\mathbb R^{n+1}$, the tensor product $u\otimes v$
defines the linear operator
\[
(u\otimes v)(w):=\langle v,w\rangle u.
\]
In particular,
\[
(\widehat{x}\otimes\widehat{x})(v)
=
\langle\widehat{x},v\rangle\widehat{x}.
\]
Since $|\widehat{x}|=1$, the operator
$\widehat{x}\otimes\widehat{x}$ is the orthogonal projection
onto the radial direction $\operatorname{span}\{\widehat{x}\}$,
whereas
\[
I-\widehat{x}\otimes\widehat{x}
\]
is the orthogonal projection onto the orthogonal complement
$\widehat{x}^{\perp}$. For convenience, define
\[
a(r):=\frac{1}{R}\chi_0'\left(\frac{r}{R}\right).
\]
Then
\[
\nabla\chi_R=a(r)\widehat{x}.
\]
For any $v\in\mathbb R^{n+1}$, the product rule gives
\begin{eqnarray*}
\operatorname{Hess}\chi_R(v)=D(\nabla\chi_R)(v) & = & Da(v)\,\widehat{x}+a(r)D\widehat{x}(v)\\
& = & a'(r)Dr(v)\,\widehat{x}+a(r)D\widehat{x}(v).
\end{eqnarray*}
First, by the chain rule,
\[
a'(r)=\frac{1}{R^2}\chi_0''\left(\frac{r}{R}\right).
\]
Since $Dr(v)=\langle \nabla r,v\rangle =\langle\widehat{x},v\rangle$, we obtain
\[
Da(v)
=
\frac{1}{R^2}\chi_0''\left(\frac{r}{R}\right)
\langle\widehat{x},v\rangle.
\]
Therefore,

\begin{equation}\label{eq.diva}
Da(v)\,\widehat{x}  =  
\frac{1}{R^2}\chi_0''\left(\frac{r}{R}\right)
\langle\widehat{x},v\rangle\widehat{x}\\
= \frac{1}{R^2}\chi_0''\left(\frac{r}{R}\right)
(\widehat{x}\otimes\widehat{x})(v).    
\end{equation}

Next, since $\widehat{x}=x/r$, differentiation in the
direction $v$ yields
\[
D\widehat{x}(v)
=
\frac{v}{r}-\frac{x}{r^2}Dr(v).
\]
Using $x=r\widehat{x}$ and
$Dr(v)=\langle\widehat{x},v\rangle$, we find that
\[
\begin{aligned}
D\widehat{x}(v)
&=
\frac{v}{r}
-\frac{\widehat{x}}{r}
\langle\widehat{x},v\rangle\\
&=
\frac{1}{r}
\left(v-\langle\widehat{x},v\rangle\widehat{x}\right)\\
&=
\frac{1}{r}
\left(I-\widehat{x}\otimes\widehat{x}\right)(v).
\end{aligned}
\]
Hence,
\[
D\widehat{x}
=
\frac{1}{r}
\left(I-\widehat{x}\otimes\widehat{x}\right).
\]
Multiplying by $a(r)$, we obtain
\begin{equation}\label{eq.divxchapeu}
a(r)D\widehat{x}
=
\frac{\chi_0'(r/R)}{Rr}
\left(I-\widehat{x}\otimes\widehat{x}\right).    
\end{equation}

Combining the two Equations \eqref{eq.diva} and \eqref{eq.divxchapeu} in the derivative of
$\nabla\chi_R$, we conclude that
\begin{equation}\label{eq:hesscutoff}
\begin{aligned}
\operatorname{Hess}\chi_R
={}&
\frac{\chi_0''(r/R)}{R^2}
\widehat{x}\otimes\widehat{x}+
\frac{\chi_0'(r/R)}{Rr}
\left(I-\widehat{x}\otimes\widehat{x}\right).
\end{aligned}
\end{equation}

We next estimate the operator norm of this Hessian.
Set
\[
P:=\widehat{x}\otimes\widehat{x},
\qquad
Q:=I-P,
\]
and
\[
\alpha:=\frac{\chi_0''(r/R)}{R^2},
\qquad
\beta:=\frac{\chi_0'(r/R)}{Rr}.
\]
Then \eqref{eq:hesscutoff} becomes
\[
\operatorname{Hess}\chi_R=\alpha P+\beta Q.
\]
The operators $P$ and $Q$ are orthogonal projections onto
complementary orthogonal subspaces. In particular, for every
$v\in\mathbb R^{n+1}$,
\[
v=Pv+Qv,\qquad
\langle Pv,Qv\rangle=0,
\]
and therefore
\[
|v|^2=|Pv|^2+|Qv|^2.
\]
It follows that
\[
\begin{aligned}
|\operatorname{Hess}\chi_R(v)|^2
&=|\alpha Pv+\beta Qv|^2\\
&=\alpha^2|Pv|^2+\beta^2|Qv|^2\\
&\le
\max\{|\alpha|^2,|\beta|^2\}
\left(|Pv|^2+|Qv|^2\right)\\
&=
\max\{|\alpha|^2,|\beta|^2\}|v|^2.
\end{aligned}
\]
Taking the supremum over all unit vectors $v$, we obtain
\[
\|\operatorname{Hess}\chi_R\|
\le\max\{|\alpha|,|\beta|\}.
\]
Equality holds because the radial subspace and its orthogonal
complement are nontrivial eigenspaces of the Hessian. Hence,
\[
\|\operatorname{Hess}\chi_R\|
=
\max\left\{
\frac{|\chi_0''(r/R)|}{R^2},
\frac{|\chi_0'(r/R)|}{Rr}
\right\}.
\]

Finally, since $\chi_0$ is constant on $[0,1]$ and on
$[2,\infty)$, both $\nabla\chi_R$ and
$\operatorname{Hess}\chi_R$ vanish outside the annulus $R\le r\le2R.$ On this annulus, $r\ge R$, so
$\frac{1}{Rr}\le\frac{1}{R^2}.$ Consequently,
\[
\begin{aligned}
\|\operatorname{Hess}\chi_R\|
&\le
\frac{1}{R^2}
\max\left\{
\sup_{s\ge0}|\chi_0''(s)|,
\sup_{s\ge0}|\chi_0'(s)|
\right\}\\
&\le\frac{C_0}{R^2}.
\end{aligned}
\]
This proves the second estimate in \eqref{eq:cutoff}, and
completes the proof of both claimed bounds.

\end{proof}

\section{Proof of the conjectures}

\subsection{Proof of Conjecture \ref{conjecture:yau-docarmo-solomon}}

\begin{theorem}\label{thm:main_euclidean}
Let $\mathfrak{F}$ be a codimension one CMC foliation of $\mathbb{R}^{n+1}$. Then, each leaf of $\mathfrak{F}$ is a minimal and stable hypersurface. 
\end{theorem}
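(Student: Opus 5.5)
The plan is to apply Proposition~\ref{prop:int} with the cutoff $\chi=\chi_R$ of Subsection~\ref{subsec:estimates}. We then show that the left-hand side grows like the volume of $B_R$ while the right-hand side is of lower order, which forces $H\equiv0$. Stability will follow from a standard argument for minimal foliations.

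\textbf{Step 1: estimating the test function.} By Lemma~\ref{lem:hessian} and \eqref{eq:cutoff}, since $0\le\chi_R\le1$,
\[
|\nabla(\chi_R^q)|\le q\chi_R^{q-1}\frac{C_0}{R},\qquad \|\operatorname{Hess}(\chi_R^q)\|\le \bigl(q(q-1)C_0^2+qC_0\bigr)\frac{\chi_R^{q-2}}{R^2}.
\]
Insert these bounds into \eqref{eq:int}. Set $I(R):=\int\chi_R^qH^{p+1}$ and apply H\"older's inequality with exponents $\frac{p+1}{p}, p+1$ to the first term and $\frac{p+1}{p-1},\frac{p+1}{2}$ to the second. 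The powers of $\chi_R$ match exactly: $(\chi_R^{q-1})^{(p+1)/p}=\chi_R^{q}$ and $(\chi_R^{q-2})^{(p+1)/(p-1)}=\chi_R^q$, because $q=p+1$. This yields
\[
(p-n)I(R)\le C\,I(R)^{\frac{p}{p+1}}\,R^{-1}|B_{2R}|^{\frac1{p+1}}+C\,I(R)^{\frac{p-1}{p+1}}\,R^{-2}|B_{2R}|^{\frac{2}{p+1}} .
\]
Since $|B_{2R}|\sim R^{n+1}$, both error factors scale like $R^{(n+1)/(p+1)-1}$ to the appropriate power. Dividing through, we obtain $I(R)^{1/(p+1)}\le C' R^{\frac{n+1}{p+1}-1}\cdot$(bounded), that is, $I(R)\le C'' R^{n-p}$.

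\textbf{Step 2: conclude minimality.} The bound $I(R)\le C''R^{n-p}$ alone is the main point. Because $p>n$, its right-hand side tends to $0$ as $R\to\infty$. On the other hand, $I(R)\ge\int_{B_{R_0}}H^{p+1}$ for $R\ge R_0$, since $\chi_R=1$ on $B_R$. Hence $\int_{B_{R_0}}|H|^{p+1}=0$ for every $R_0$, so $H\equiv0$ on $\mathbb R^{n+1}$ and every leaf is minimal. The delicate point is the bookkeeping that lets $I(R)$ be absorbed. If $I(R)=0$ we are done trivially; otherwise we divide by $I(R)^{(p-1)/(p+1)}$ and solve the resulting quadratic inequality in $I(R)^{1/(p+1)}$. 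This step needs no a priori bound on $T$, because Lemma~\ref{lem:T} has already removed $T$ from the estimate. I expect this absorption step to be the main obstacle, though it should work as sketched.

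\textbf{Step 3: stability.} With $H\equiv0$, take $g=N(H)=0$. I will argue stability via the standard positive Jacobi field or calibration argument. By Lemma~\ref{lem:divT}, $\operatorname{div}T=|A|^2$. Restrict to a leaf using Lemma~\ref{lem:divX} and $T=\nabla_NN$. Let $\phi$ be the local leaf-spacing function relating nearby leaves, so that $T=-\nabla_L\log\phi$ and $\phi>0$ solves the Jacobi equation $\Delta_L\phi+|A|^2\phi=0$. Alternatively, for $f\in C^\infty_c(L)$, integrate $\operatorname{div}_L(f^2T)$ and use $|T|^2$ to get
\[
\int_L|A|^2f^2=\int_L\bigl(-2f\langle\nabla_Lf,T\rangle-f^2|T|^2\bigr)\le\int_L|\nabla_Lf|^2 .
\]
This uses the identity $\operatorname{div}_LT=|A|^2+|T|^2$ on minimal leaves, which follows by combining Lemmas~\ref{lem:divT} and \ref{lem:divX}. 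The displayed inequality is exactly the stability inequality, so each leaf is stable.
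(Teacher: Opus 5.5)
Your Steps 1 and 2 follow the paper's argument: the same cutoff $\chi_R$, the same bounds on $\nabla(\chi_R^q)$ and $\Hess(\chi_R^q)$, the same H\"older exponents, and the same quadratic absorption giving $I(R)\le CR^{n-p}$. Your opening remark that the left side ``grows like the volume of $B_R$'' does not describe what you then do, but the argument itself is right.

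The real difference is stability. The paper simply invokes Theorem~1 of Oshikiri, while you derive it from the paper's own lemmas. Once $H\equiv0$ you have $g=0$, so Lemma~\ref{lem:divT} gives $\divg T=|A|^2$. Lemma~\ref{lem:divX} with $X=T$ then gives $\divg_L T=|A|^2+|T|^2$ on each leaf. Integrate $\divg_L(f^2T)$ over $L$ for $f\in C^\infty_c(L)$ and use $-2f\langle\nabla_L f,T\rangle-f^2|T|^2=|\nabla_L f|^2-|\nabla_L f+fT|^2$; this yields $\int_L|A|^2f^2\le\int_L|\nabla_L f|^2$.

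This is correct and entirely intrinsic to the leaf (compact support in the leaf topology, divergence theorem on $L$), so it needs no properness of the leaves. It is essentially Oshikiri's argument specialized to flat ambient space; in a general ambient manifold the same computation carries an extra $\mathrm{Ric}(N,N)$ term. Your route gives a self-contained proof, while the citation gives brevity and generality.

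One caution concerns your first alternative, the leaf-spacing function with $T=-\nabla_L\log\phi$ solving the Jacobi equation. In general this is only available on foliated charts, because the leafwise $1$-form $\langle T,\cdot\rangle|_L$ is closed but need not be exact on a whole leaf. The divergence argument is the one to keep.
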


\begin{proof}
Fix $R>0$, let $\chi:=\chi_R$, where $\chi_R$ is defined in Subsection \ref{subsec:estimates}, and define
\[
a:=|H|\chi\geq 0.
\]
Recall that $q=p+1$, where $p>n$ is an odd integer. By the estimates in
\eqref{eq:cutoff}, we have
\[
|\nabla\chi|\leq \frac{C_0}{R},
\qquad
\|\operatorname{Hess}\chi\|\leq \frac{C_0}{R^2}.
\]
Since $0\leq\chi\leq 1$, the chain rule yields
\[
\nabla(\chi^q)=q\chi^{q-1}\nabla\chi
=q\chi^p\nabla\chi,
\]
and hence
\[
|\nabla(\chi^q)|
=q\chi^p|\nabla\chi|
\leq \frac{qC_0}{R}\chi^p.
\]

By Lemma \ref{lem:hessian}, we have
\[
\operatorname{Hess}(\chi^q)
=
q(q-1)\chi^{q-2}\,d\chi\otimes d\chi
+
q\chi^{q-1}\operatorname{Hess}\chi.
\]

Taking operator norms and using $q-1=p$ and $q-2=p-1$, we obtain
\begin{align*}
\|\operatorname{Hess}(\chi^q)\|
&\leq q(q-1)\chi^{p-1}|\nabla\chi|^2
+q\chi^p\|\operatorname{Hess}\chi\|\\
&\leq \frac{q(q-1)C_0^2}{R^2}\chi^{p-1}
+\frac{qC_0}{R^2}\chi^p\\
&\leq \frac{C_1}{R^2}\chi^{p-1},
\end{align*}
where $C_1:=q(q-1)C_0^2+qC_0.$ In the last inequality, we used $\chi^p\leq\chi^{p-1}$. Since $p$ is odd, both $p+1$ and $p-1$ are even. Therefore,
\[
|H|^p\chi^p=a^p,
\qquad
H^{p-1}\chi^{p-1}=a^{p-1},
\qquad
\chi^{p+1}H^{p+1}=a^{p+1}.
\]
Moreover, $\chi$ vanishes outside $\overline{B_{2R}}$, and thus
$a$ also vanishes there. Set
\[
I=I(R):=\int_{\mathbb{R}^{n+1}}\chi^{p+1}H^{p+1}
=\int_{\mathbb{R}^{n+1}}a^{p+1}.
\]
In particular, $I<\infty$. Define the constants
\[
A_1:=(p+1)qC_0,
\qquad
A_2:=pC_1.
\]
Applying Proposition~\ref{prop:int} and the preceding estimates, we obtain
\begin{align*}
(p-n)I
&\leq (p+1)\int_{\mathbb{R}^{n+1}}
|H|^p|\nabla(\chi^q)|
+p\int_{\mathbb{R}^{n+1}}
H^{p-1}\|\operatorname{Hess}(\chi^q)\|\\
&\leq \frac{A_1}{R}\int_{B_{2R}}a^p
+\frac{A_2}{R^2}\int_{B_{2R}}a^{p-1}.
\end{align*}

Let
\[
\mathcal V:=|B_{2R}|=\omega_{n+1}(2R)^{n+1},
\]
where $\omega_{n+1}$ denotes the volume of the unit ball in
$\mathbb{R}^{n+1}$. Since $a\geq0$ and
\[
\int_{B_{2R}}a^{p+1}
\leq \int_{\mathbb{R}^{n+1}}a^{p+1}=I,
\]
H\"older's inequality gives
\begin{align*}
\int_{B_{2R}}a^p
&\leq
\left(\int_{B_{2R}}a^{p+1}\right)^{\frac{p}{p+1}}
|B_{2R}|^{\frac{1}{p+1}}\\
&\leq I^{\frac{p}{p+1}}\mathcal V^{\frac{1}{p+1}},
\end{align*}
and, in the same way,
\begin{align*}
\int_{B_{2R}}a^{p-1}
&\leq
\left(\int_{B_{2R}}a^{p+1}\right)^{\frac{p-1}{p+1}}
|B_{2R}|^{\frac{2}{p+1}}\\
&\leq I^{\frac{p-1}{p+1}}\mathcal V^{\frac{2}{p+1}}.
\end{align*}
Now put
\[
X:=I^{\frac{1}{p+1}},
\qquad
Y:=\frac{\mathcal V^{\frac{1}{p+1}}}{R}.
\]
Both $X$ and $Y$ are nonnegative, and $Y>0$. Substituting the
H\"older estimates into the preceding integral inequality yields
\[
(p-n)X^{p+1}
\leq A_1X^pY+A_2X^{p-1}Y^2.
\]

If $X=0$, then $X\leq\Lambda Y$ holds trivially for every
$\Lambda>0$. Suppose, therefore, that $X>0$. Dividing the last
inequality by $X^{p-1}Y^2>0$, we obtain
\[
(p-n)\left(\frac{X}{Y}\right)^2
-A_1\frac{X}{Y}-A_2\leq0.
\]
Writing $s:=X/Y$, this becomes
\[
(p-n)s^2-A_1s-A_2\leq0.
\]
Since $p-n>0$ and $s\geq0$, the quadratic inequality implies
\[
s\leq
\frac{A_1+\sqrt{A_1^2+4(p-n)A_2}}{2(p-n)}
=:\Lambda.
\]
Consequently, in both cases $X=0$ and $X>0$, we have
\[
X\leq\Lambda Y.
\]
Raising this inequality to the power $p+1$ and recalling the
definitions of $X$ and $Y$, we conclude that
\[
I\leq\Lambda^{p+1}\frac{\mathcal V}{R^{p+1}}.
\]
Since $\mathcal V=\omega_{n+1}(2R)^{n+1}$, it follows that
\[
I(R)\leq
\Lambda^{p+1}\omega_{n+1}2^{n+1}R^{n-p}.
\]
The constant $\Lambda$ depends only on $n$, $p$, and the chosen
cutoff function $\chi_0$. Moreover, since $\chi=1$ on $B_R$ and
$H^{p+1}=|H|^{p+1}\geq0$, we have
\begin{equation*}
\int_{B_R}{|H|^{p+1}}\leq I(R)\leq
\Lambda^{p+1}\omega_{n+1}2^{n+1}R^{n-p}.
\label{eq:decay}
\end{equation*}

We now show that $H$ vanishes identically. Fix $R_0>0$. For every
$R\geq R_0$, the inclusion $B_{R_0}\subset B_R$ and the
nonnegativity of $|H|^{p+1}$ imply
\[
0\leq\int_{B_{R_0}}|H|^{p+1}
\leq\int_{B_R}|H|^{p+1}
\leq
\Lambda^{p+1}\omega_{n+1}2^{n+1}R^{n-p}.
\]
Because $p>n$, we have $n-p<0$, and therefore
\[
\lim_{R\to\infty}R^{n-p}=0.
\]
Letting $R\to\infty$ in the preceding inequality gives
\[
\int_{B_{R_0}}|H|^{p+1}=0.
\]
The function $|H|^{p+1}$ is continuous and nonnegative. Hence it
must vanish at every point of $B_{R_0}$; otherwise, continuity
would give a neighborhood on which it is strictly positive,
contradicting the vanishing of its integral. Thus $H=0$ on $B_{R_0}$. Since $R_0>0$ was arbitrary, we conclude that $H\equiv0$ on $\mathbb{R}^{n+1}.$ Therefore, every leaf of $\mathfrak F$ has vanishing mean curvature and is consequently a minimal hypersurface. Finally, stability follows from Theorem 1 of \cite{Oshikiri-1982}.
\end{proof}

\subsection{Proof of Conjecture \ref{conjecture:cmc_solomon} for $n\leq 6$}

\begin{corollary}\label{thm:main_euclidean_low_dim}
Let $\mathfrak{F}$ be a codimension one CMC foliation of $\mathbb{R}^{n+1}$. If $n\leq 6$, then $\mathfrak{F}$ consists entirely of parallel hyperplanes.
\end{corollary}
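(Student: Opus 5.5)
By Theorem~\ref{thm:main_euclidean}, every leaf $L$ of $\mathfrak F$ is a minimal hypersurface that is stable (indeed, Oshikiri's argument gives more: $L$ admits a positive Jacobi field, namely the normal speed $\langle \partial_t, N\rangle$ of the local flow transverse to the leaves). Each leaf is also complete in its induced metric, being a leaf of a foliation of the complete manifold $\R^{n+1}$. So the plan is to reduce the corollary to the stable Bernstein problem: every complete, two-sided, stable minimal hypersurface of $\R^{n+1}$ with $n\le 6$ is a hyperplane. This is known in the relevant dimensions:
\begin{itemize}
\item $n=2$: do Carmo--Peng and Fischer-Colbrie--Schoen \cite{doCarmoP:1979,ColbrieS:1979}, Pogorelov \cite{Pogorelov:1981};
\item $n=3$: Chodosh--Li \cite{ChodoshL-2021};
\item $n=4$: Chodosh--Li--Minter--Stryker \cite{ChodoshLMD-2024};
\item $n=5$: Mazet, building on the preceding;
\item $n=6$: this case must be checked against the present state of the literature and cited accordingly.
\end{itemize}
Two-sidedness is automatic, since $N$ is a global unit normal. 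Stability need not be imposed separately: a positive Jacobi field forces stability through the Fischer-Colbrie--Schoen criterion, and this holds without properness or finite-topology assumptions.

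The first step is therefore to check that the hypotheses of the cited stable Bernstein theorems fit the leaves, which are only injectively immersed and not necessarily proper. These theorems are stated for complete, two-sided, stable immersed minimal hypersurfaces, so properness is not needed. Applying them, every leaf is an affine hyperplane.

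The second step is to show the hyperplanes are parallel. Two leaves are disjoint. Two disjoint affine hyperplanes in $\R^{n+1}$ cannot be transverse, since non-parallel hyperplanes always intersect. Hence all leaves share the same normal line, and $\mathfrak F$ consists of parallel hyperplanes, necessarily filling $\R^{n+1}$.

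The main obstacle is the dependence on the stable Bernstein theorem in dimensions $n=5,6$. If it is unavailable at $n=6$, I would try to exploit the extra structure the foliation provides. The leaves are minimal, so $H\equiv 0$, hence $g=0$ and Lemma~\ref{lem:divT} gives $\operatorname{div}T=|A|^2$. I would then run the same cutoff and divergence scheme as in the proof of Theorem~\ref{thm:main_euclidean}, using the vector field $|A|^{s}T$ or the powers of $T$ coming from the positive Jacobi field, together with Simons' inequality, to force $A\equiv 0$. If this succeeds, every leaf is totally geodesic, and the argument concludes as in the second step.
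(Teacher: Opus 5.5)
Your proposal follows the paper's proof: the leaves are complete, two-sided (via the global normal $N$), and minimal and stable by Theorem~\ref{thm:main_euclidean}, so the stable Bernstein theorems for $n\le 6$ make each leaf a hyperplane, and disjointness forces the hyperplanes to be parallel (a step the paper leaves implicit). The $n=6$ case you left open is covered in the paper by citing the stable Bernstein theorem in $\R^7$ from \cite{HongLW:2026}, so your fallback via $|A|^sT$ and Simons' inequality is not needed; being conditional on it succeeding, it would not by itself constitute a proof.
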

\begin{proof}
By Theorem \ref{thm:main_euclidean}, the leaves are minimal and stable hypersurfaces. 
We know that in an orientable space (e.g. in $\mathbb{R}^{n+1}$) a hypersurface is orientable if and only if it is two-sided.
Since $\mathfrak{F}$ is transversely oriented, each leaf $L\in \mathfrak{F}$ is oriented. Since $n\leq 6$ and each leaf is a complete, minimal, stable, oriented hypersurface, we have, by the main results in \cite{doCarmoP:1979}, \cite{ColbrieS:1979} (for $n=2$), \cite{ChodoshL-2021} (for $n=3$), \cite{ChodoshLMD-2024} (for $n=4$), \cite{Mazet} (for $n=5$), and \cite{HongLW:2026} (for $n=6$), that each leaf $L$ of the foliation is a hyperplane. 
\end{proof}

\bigskip

\noindent {\bf Acknowledgments:} The authors would like to thank Alexandre Fernandes for his interest in this work.

During the preparation of this work, the authors used Claude, Gemini, and ChatGPT to refine arguments, locate references, and draft and revise the manuscript. All AI-generated suggestions were independently verified by the authors, who take full responsibility for the content and accuracy of the final manuscript.

\end{document}